\documentclass[12pt,reqno]{amsart}
\usepackage[utf8]{inputenc}
\usepackage[T1]{fontenc}
\usepackage{amscd}
\usepackage{amssymb}
\usepackage{amstext}
\usepackage{amsmath}
\usepackage{amsfonts}
\usepackage{amsthm}
\usepackage{color}

\theoremstyle{plain}
\newtheorem{thm}{Theorem}[section]
\newtheorem{lem}[thm]{Lemma}

\theoremstyle{definition}
\newtheorem{definition}[thm]{Definition}
\newtheorem{ex}[thm]{Example}
\newtheorem{rem}[thm]{Remark}

\numberwithin{equation}{section}

\def\a{\mathcal{A}}
\def\f{\mathcal{F}}
\def\g{\mathcal{G}}
\def\h{\mathcal{H}}
\def\u{\mathcal{U}}
\def\z{\mathcal{Z}}

\def\nn{\mathbb{N}}
\def\zz{\mathbb{Z}}
\def\rr{\mathbb{R}}

\def\diam{{\rm diam}}
\def\exp{{\rm exp}}
\def\expf{{\rm exp}^{\f}}
\def\htop{h_{{\rm top}}}
\def\intt{{\rm int}}
\def\tub{{\rm Tub}}

\keywords{geometric entropy, leafwise Finsler structure}
\thanks{The research of the first author was partially supported by the Polish Ministry of Science and Higher Education and the
AGH grant No. 11.420.04\newline\indent
The research of the second author was supported by the National Science Center grant No. 6065/B/H03/2011/40 and the Grant-in-Aid for Scientific 
Research (S) 24224002 of the Japan Society for Promotion of Science.}
\address{Ilona Michalik\newline
AGH University of Science and
Technology,\newline Faculty of Applied Mathematics,\newline al. Mickiewicza 30,\newline 30-059 Krak\'ow,\newline Poland}
\email{imichali@wms.mat.agh.edu.pl}
\address{Szymon Walczak\newline
University of \L \'od\'z,\newline Faculty of Mathematics and Computer Science,\newline ul. Banacha 22,\newline 90-238 \L \'od\'z,\newline Poland}
\email{szymon.walczak@math.uni.lodz.pl}
\date{\today}

\title{Entropy of foliations with leafwise Finsler structure}
\author{Ilona Michalik, Szymon Walczak}

\begin{document}
\sloppy
\baselineskip18pt

\begin{abstract}
We extend the notion of the geometric entropy of foliation to foliated manifolds equipped with leafwise Finsler structure. We study the relation between the geometric entropy and the topological entropy of the holonomy pseudogroup. The case of foliated manifold with leafwise Randers structure. In this case the estimates for one dimensional foliation defined by a vector field in term of topological entropy of a flow are presented.
\end{abstract}

\maketitle

\section{Introduction}

The notion of the topological entropy was introduced by Adler, Konheim and McAndrew in 1965 in \cite{bib:AdlKonMcA65}. Another approach was presented by Bowen \cite{bib:Bow71} in early 70's. Ghys, Langevin and Walczak, in \cite{bib:GhyLanWal88}, extended this notion to the topological entropy for finitely generated groups and pseudogroups of continuous transformations, as well as the geometric entropy of foliation on a compact foliated Riemannian manifold. The entropy of foliation has more geometric nature, because it depends on a Riemannian metric chosen for foliated manifold. On the other hand, the dynamics of Finsler spaces have become a subject of a research of mathematicians in late 90's and recently. However, the research in this field is rather in the initial phase.

The aim of this paper is to extend the notion of the geometric entropy of foliations to the foliated manifolds equipped with leafwise Finsler structure. In paragraphs 2 and 3, one can find all necessary definitions and properties related to entropy and foliations with leafwise Finsler metric. Next paragraph describes relations between geometric and topological entropy. Fifth part of the paper refers to foliations with leafwise Randers norm. Last paragraph describes the entropy of one dimensional foliations defined by a unit vector field with leafwise Randers metric. 

\section{Leafwise Finsler structures}

Let us recall that a {\it Minkowski norm} on a vector space $V$ is a non-negative function $F:V\to[0,\infty)$ such that
\begin{enumerate}
\item $F$ is $C^{\infty}$ on $V\setminus\{0\}$,
\item $F(\lambda v)=\lambda F(v)$ for any $\lambda>0$ and $v\in V$,
\item for every $y\in V\setminus \{0\}$, the symmetric bilinear form
\[
  g_y (u,v):= \frac{1}{2} \frac{\partial ^2}{\partial t\partial s} F^2(y+su+tv)|_{t=s=0}
\]
is positively defined.
\end{enumerate}

Now, let $M$ be a smooth manifold. A function $F:TM\to [0,\infty)$ is called a {\it Finsler norm} if
\begin{enumerate}
\item $F$ is $C^{\infty}$ on the tangent bundle with removed the zero section $TM\setminus \{0\}$,
\item for any $x\in M$ the restricted norm $F_x=F|_{T_xM}$ is a Minkowski norm. 
\end{enumerate}
The pair $(M,F)$ is called a Finsler space.

\begin{ex}\label{ex:Randers norm}
Let $(M,g)$ be a Riemannian manifold, and let $\beta:TM\to\mathbb{R}$ be a $1$-form. Let $\alpha:TM\to [0,\infty)$ be the norm defined by $g$, that is, $\alpha(v)=\sqrt{g_x(v,v)}$ for all $v\in T_x M$. Suppose that the $g$-norm of $\beta$ satisfies $||\beta||_g < 1$. We set $F(v)=\alpha(v)+\beta(v)$. $F$ is a Finsler norm and it is called a {\it Randers norm}.
\end{ex}

Note that the Finsler norm induces a function $d:M\times M\to [0,\infty)$ by the formula
\[
d(x,y)=\inf_{\gamma} \int_0^1 F(\dot{\gamma}(t)) dt
\]
where the infimum is taken over all curves $\gamma:[0,1]\to M$ linking $x$ and $y$. Function $d$ is a quasi-metric, that is, 
\[
(d(x,y)=0 \textrm{ iff } x=y) \textrm{ and } d(x,y)+d(y,z)\geq d(x,z).
\]

Let $(M,\f,g)$ be a foliated Riemaniann manifold. Having $g$, we decompose the tangent bundle to the orthogonal sum of the bundle tangent to $\f$ and the orthogonal bundle, that is, $TM=T\f \oplus T\f^{\perp}$. We replace the norm induced in $T\f$ by the Riemannian structure $g|_{T\f}$ by a Finsler norm $F_{\f}$. Denote by $\pi_1:TM\to T\f$ and $\pi_2:TM\to T\f^{\perp}$ the natural projections. We set
\[
F(v)=\sqrt{F_{\f}^2(\pi_1(v))+g(\pi_2(v),\pi_2(v))}.
\]
$F$ is a Finsler norm on $TM$ and coincides with $\sqrt{g(v,v)}$ for $v\in  T\f^{\perp}$ and with $F_{\f}$ for $v\in T\f$. We call $F$ a \emph{leafwise Finsler structure} on $(M,\f)$.

\section{Geometric entropy of foliations with leafwise Finsler metric}

Let $(M,\f,F)$ be a foliated manifold with leafwise Finsler structure. Let $\u$ be a \textit{nice covering}, i.e., a covering by the domains $D_{\varphi}$ of the charts of a nice foliated atlas $\a$, that is an atlas satisfying
\begin{enumerate}
\item the covering $\{D_{\varphi}: \varphi\in\a\}$ is locally finite,
\item for any $\varphi\in\a$, the set $R_{\varphi}=\varphi(D_{\varphi})\subset \rr^n$ is an open cube,
\item if $\varphi,\psi\in\a$, and $D_{\varphi}\cap D_{\psi}\neq\emptyset$, then there exists a chart $\chi=(\chi',\chi'')$ such that for any leaf $L$ of $\f$ the connected components of $L\cap D_{\chi}$ are given by the equation $\chi''={\rm const}$, and $R_{\chi}$ is an open cube, $D_{\chi}$ contains the closure of $D_{\varphi}\cup D_{\psi}$ and $\varphi=\chi|_{D_{\varphi}}$ and $\psi=\chi|_{D_{\psi}}$.
\end{enumerate}
Let $U\in\u$. Equip the space of plaques $T_U=U/_{\f|U}$ with the quotient topology. The disjoint union $T=\bigsqcup \{T_U; U\in\u\}$ is called a {\it complete transversal} for $\f$. Note that each of $T_U$ can be mapped homeomorphically onto a $C^r$-submanifold $T'_U\subset U$ transverse to $\f$.

Following \cite{bib:GhyLanWal88}, let us recall that for a given nice covering $\u$ of $(M,\f)$ there exists an $\varepsilon_0>0$ such that any point $x\in U$, $U\in \u$, can be projected orthogonally in the unique way to the plaque $P_y\subset U$ passing through a point $y\in U$ if only $\max\{ d(x,y),d(y,x)\}<\varepsilon_0$. 

Let $\gamma:[0,1]\to L$ be a leafwise curve beginning at $x\in U$. For any $y\in U$ lying within the distance $\varepsilon<\varepsilon_0$, we can project orthogonally an initial part of the curve $\gamma$ to the plaque $P_y$ passing through $y$. Replacing $x$ and $y$ by the endpoints of the already projected piece and its image $\gamma_1$, we can continue this process as long as the distance between $\gamma$ and $\gamma'$ does not exceed $\varepsilon_0$. We will denote the projection of $\gamma$ by $p_y\gamma$.

Let $\u$ be a nice covering and let $T$ be the complete transversal for $\u$. Let $\varepsilon\in (0,\varepsilon_0)$, and let $d$ denotes the metric induced by the Finsler structure.

\begin{definition}
We say that $x,y\in T$ are $(R,\varepsilon)$-separated by $\f$ with respect to $F$ if either 
\begin{itemize}
  \item $\max\{d(x,y),d(y,x)\}\geq\varepsilon_0$
\end{itemize}
or 
\begin{itemize}
  \item there exists a leaf curve $\gamma:[0,1]\to L_x$ such that $\gamma(0)=x$, $l(\gamma)=\int_0^1 F(\dot{\gamma}(t))dt \leq R$ and 
  \[
  \max\{d(\gamma(1),p_y\gamma(1)),d(p_y\gamma(1),\gamma(1))\}\geq \varepsilon.
  \]
  (or a leaf curve $\gamma:[0,1]\to L_y$ such that $\gamma(0)=y$, $l(\gamma)\leq R$, and $\max\{d(\gamma(1),p_x\gamma(1)),d(p_x\gamma(1),\gamma(1))\}\geq \varepsilon$).
\end{itemize}

A subset $A\subset T$ is called $(R,\varepsilon)$-separated if any two points $x,y\in A$, $x\neq y$, are $(R,\varepsilon)$-separated. Let $s(R,\varepsilon,\f)$ denote the maximum cardinality of an $(R,\varepsilon)$-separated subset of $T$. We set $s(\varepsilon,\f) = \limsup\limits_{R\to\infty} \frac{1}{R} \log s(R,\varepsilon,\f)$, and 
\[
h(\f,F) = \lim_{\varepsilon\to 0^+} s(\varepsilon,\f).
\]
\end{definition}

\begin{rem}
The number $h(\f,F)$ does not depend on the choice of the nice covering $\u$. Let $\u'$ and $T'$ be another nice covering and complete transversal. Let $\varepsilon>0$ be small enough, and let us denote by $d_{\f}$  the leafwise metric induced by the Finsler structure $F_{\f}$. Since $M$ is compact, the geometry of $\f$ is bounded. Hence, one can project $T$ onto $T'$ in such a way that any $(R,\varepsilon)$-separated points $x,y\in T$ are projected to $x',y'\in T'$, respectively, which are $(R+R_0,\varepsilon)$-separated with $R_0$ being the maximum of the numbers $d_{\f}(x,x')$ and $d_{\f}(x',x)$, $x\in T\cap U$, $x'\in T'\cap U'$, $U\in\u$, $U'\in \u'$, and the plaques $P_x\subset U$ and $P_{x'}\subset U'$ intersects. Thus
\[
  s'(R-R_0,\varepsilon,\f)\leq s(R,\varepsilon,\f)\leq s'(R+R_0,\varepsilon,\f),
\]
and both numbers $h(\f,F)$ and $h'(\f, F)$ are equal. 
\end{rem}

\begin{rem}
Since any two Riemannian structures $g$ and $g'$ on a compact manifold satisfies
\[
  c^{-2} g(v,w) \leq g'(v,w)\leq c^2 g(v,w)
\]
for some constant $c>1$, then the number $h(\f,F)$ does not depend on the choice of the Riemannian part of $F$. Indeed, there exists a constant $a>1$ such that for any leaf curve $\gamma$ and its orthogonal projections $p_y\gamma$ and $p_y'\gamma$, with respect to $g$ and $g'$ respectively, satisfies
\[
  d(\gamma(t),p_y\gamma(t)) \leq a\cdot d'(\gamma(t),p_y'\gamma(t)),
\]
if $d(\gamma(t),p_y'\gamma(t))<\varepsilon$ for sufficiently small $\varepsilon>0$. Thus any two $(R,\varepsilon)$-separated points with respect to $F = \sqrt{F_{\f}^2+g}$ are $(R,\frac{\varepsilon}{a})$-separated with respect to $F' = \sqrt{F_{\f}^2+g'}$, and $ h(\f,F) \leq h(\f,F')$. Analogically we show that $h(\f,F') \leq h(\f,F)$.
\end{rem}

Since any two leafwise Finsler structures $F$ and $F'$ on a compact foliated manifold satisfies
\[
 \frac{1}{c} F(v) \leq F'(v) \leq c\cdot F(v)
\]
for some constant $c\geq 1$, the geometric entropies $h(\f,F)$ and $h(\f,F')$ are both either equal to zero or not. The number $h(\f,F)$ is called the {\it geometric entropy of foliation with leafwise Finsler structure}. In further consideration we will denote by $F$ both, the structure $F_{\f}$ and the leafwise Finsler structure $F=\sqrt{F_{\f}^2+g}$.

\section{Relation between geometric entropy and topological entropy of holonomy pseudogroup}

Let $(M,\f)$ be a compact foliated manifold. Following \cite{bib:GhyLanWal88} or \cite{bib:Wal04}, one can define the topological entropy of the holonomy pseudogroup $\h_{\u}$ defined by the nice covering $\u$. The symbol $D_f$ denotes here the domain
of a map $f$.

To begin, let $\g$ be a pseudogroup (see \cite{bib:Wal04}) on a metric (quasi-metric) space $(X,d)$ generated by a good symmetric finite set $\g_1$, that is
\begin{enumerate}
\item for any $g\in\g$ and any $x\in D_g$ there exist $g_1,\dots,g_n\in \g_1$ and open subset $U\subset D_g$ containing $x$ such that
\[
  g|_U = g_1\circ\cdots\circ g_n|_U,
\]
\item for any $g\in \g_1$ the exits a compact set $K_g\subset D_g$ such that $g|_{\intt K_g}$ generate $\g$.

\end{enumerate}

We say that $x,y\in X$ are $(n,\varepsilon)$-separated by $\g$ if there exists 
\[
g\in \g_n^c := \{ g_1|_{K_1}\circ \cdots \circ g_n|_{K_n}; g_i\in \g_1\}
\]
such that $\{x,y\}\subset D_g$ and
\[
\max\{d(g(x),g(y)), d(g(y),g(x))\}\geq \varepsilon.
\]
A subset $A$ of $X$ is called $(n,\varepsilon)$-separated if any two distinct points of $A$ are $(n,\varepsilon)$-separated. Let $s(n,\varepsilon,\g_1)$ be the maximal cardinality of an $(n,\varepsilon)$-separated subset of $X$. We set 
\[
s(\varepsilon,\g_1) = \limsup_{n\to\infty} \frac{1}{n} \log s(n,\varepsilon,\g_1).
\]
The number $h(\g,\g_1) = \lim\limits_{\varepsilon\to 0^{+}} s(\varepsilon,\g_1)$ is called the {\it topological entropy of the pseudogroup $\g$ with respect to $\g_1$}.

Let $\u$ be a nice covering of $(M,f)$ and let $T$ be a complete transversal. Given two sets $U,V\in\u$ such that $U\cap V\neq\emptyset$, one can define the {\it holonomy map} $h_{VU}:D_{VU}\to T_V$ with $D_{VU}$ being the open subset of $U$ consisting of all plaques $P\subset U$ such that $P\cap V\neq\emptyset$ by
\[
h_{VU}(P) = P'\textrm{ iff }P\subset U\textrm{ and }P'\subset V\textrm{ insterect.}
\]
The mappings $h_{VU}$ generates the {\it holonomy pseudogroup} $\h_{\u}$ on $T$. We will denote by $\h_\u^1$ the set of $\{h_{VU}\}$ of the generators of $\h_\u$.

One of the main results of \cite{bib:GhyLanWal88} is Theorem 3.4 about the relation between the geometric entropy $h(\f,g)$ of a foliation on a Riemannian manifold and the topological entropy of the holonomy pseudogroup $\h_{\u}$ defined by the nice covering $\u$. We extend this result to the class of foliations with leafwise Finsler structures. 

Let $(M,\f,F)$ be a foliated manifold equipped with leafwise Finsler structure.

\begin{thm}\label{thm:GeometricVsPseudogroup} 
Let $U$ be a nice covering, and let $\diam(\u)$ be  the diameter of the nice covering $\u$, that is, 
\[
\diam(\u) = \max_{U\in \u}\; \max_{P\subset U}\; \max_{x,y\in P} d_{\f}(x,y),
\]
where $P$ denotes a plaque of a chart $U$, and $d_{\f}$ is the leafwise distance defined by $F$. Then
\[
h(\f,F) = \sup_{\u} \left\{ \frac{1}{\diam(\u)} h(\h_{\u}, \h_{\u}^1) \right\}
\]
\end{thm}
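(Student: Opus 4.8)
The plan is to reduce the theorem to a dictionary between leafwise curves, measured by their Finsler length $R$, and elements of the holonomy pseudogroup, measured by their word length $n$, with the diameter $\diam(\u)$ playing the role of the exchange rate $R\leftrightarrow n\cdot\diam(\u)$. This follows the pattern of the Riemannian argument in \cite{bib:GhyLanWal88}; the only genuinely new ingredient is the asymmetry of the quasi-metric $d$, which I would absorb throughout by keeping the symmetrized quantity $\max\{d(a,b),d(b,a)\}$ that is already built into both separation conditions.

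First I would fix the correspondence for a single element. Given a word $g=h_{U_nU_{n-1}}\circ\cdots\circ h_{U_1U_0}$ of length $n$ in $\h_\u^1$ with $\{x,y\}\subset D_g$, the chain of overlapping plaques it determines can be threaded by a leafwise curve $\gamma$ issuing from $x$ with $g(x)=\gamma(1)$ and $g(y)=p_y\gamma(1)$; since each plaque has $d_\f$-diameter at most $\diam(\u)$, one obtains $l(\gamma)\le n\cdot\diam(\u)$, and the pseudogroup separation condition for $g$ becomes verbatim the geometric separation condition for $\gamma$. Conversely, given a separating leafwise curve $\gamma$ with $l(\gamma)\le R$, I would subdivide it at parameters $0=t_0<\cdots<t_k=1$ so that each $\gamma([t_{i-1},t_i])$ lies in a single chart, read off the resulting word $g$ of length $k$ in $\h_\u^1$, and observe that the holonomy of $\gamma$ on the plaque of $x$ again sends $x\mapsto\gamma(1)$ and $y\mapsto p_y\gamma(1)$, so $g$ realizes the same separation after shrinking $\varepsilon$ to an $\varepsilon'$ controlled by the bounded transverse geometry.

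From the forward half of this dictionary the easy inequality is immediate: any $(n,\varepsilon)$-separated set for $\h_\u$ is $(n\cdot\diam(\u),\varepsilon)$-separated for $\f$, so $s(n,\varepsilon,\h_\u^1)\le s(n\cdot\diam(\u),\varepsilon,\f)$; dividing by $n$, letting $n\to\infty$ and then $\varepsilon\to0^+$ gives $\tfrac{1}{\diam(\u)}h(\h_\u,\h_\u^1)\le h(\f,F)$ for every nice covering, whence $\sup_\u\{\cdots\}\le h(\f,F)$. For the reverse inequality I would feed the converse half with the observation that along a unit-speed leafwise geodesic in a suitable nice covering consecutive plaques advance the arclength by almost $\diam(\u)$, so the subdivision above can be taken with $k\le R/\diam(\u)+C$ for a constant $C$ independent of $R$. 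This yields $s(R,\varepsilon,\f)\le s\big(R/\diam(\u)+C,\varepsilon',\h_\u^1\big)$; dividing by $R$ and passing to the two limits makes both the constant $C$ and the change $\varepsilon\to\varepsilon'$ disappear, giving $h(\f,F)\le\tfrac{1}{\diam(\u)}h(\h_\u,\h_\u^1)$ for such coverings and therefore $h(\f,F)\le\sup_\u\{\cdots\}$.

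The hard part will be the efficiency estimate $k\le R/\diam(\u)+C$ in the converse half: for an arbitrary nice covering a geodesic can be forced through many undersized plaques, so only the weaker bound $k\le R/\delta$ with $\delta$ a Lebesgue number of $\u$ is automatic, and $\delta$ may lie far below $\diam(\u)$. I would bypass this by restricting the supremum to coverings whose plaques are uniformly of $d_\f$-size close to $\diam(\u)$ and in which the leafwise metric is nearly Euclidean in each chart, so that a unit-speed leafwise geodesic crosses almost the full diameter of every plaque it meets before leaving it. The first remark preceding the theorem is what legitimizes this restriction: since $h(\f,F)$ does not depend on the nice covering, the reverse inequality may be tested on these efficient coverings; the second remark, giving independence of the Riemannian part, further ensures that the near-Euclidean normalization of the charts does not affect $h(\f,F)$. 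The remaining bookkeeping, namely the passage from $\varepsilon$ to $\varepsilon'$ that transfers the transverse discrepancy between the continuous projection $p_y\gamma$ and its discrete holonomy image, is handled as in the Riemannian case by compactness of $M$ and the attendant bounded geometry, and is harmless in the limit $\varepsilon\to0^+$.
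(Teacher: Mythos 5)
Your overall architecture matches the paper's: the easy inequality by threading a holonomy word of length $n$ with a leafwise curve of length at most $n\cdot\diam(\u)$, and the hard inequality by exhibiting one efficient covering (which suffices, since the right-hand side is a supremum); the bookkeeping you defer (the $\varepsilon\mapsto\varepsilon/C$ distortion between transversal distance and projection distance, and the count $N(\varepsilon)$ of pairs that are already $\varepsilon_0$-far apart) is indeed harmless in the limits. The gap is in the mechanism you propose for the efficiency estimate $k\le R/\diam(\u)+C$. You want a covering in which ``a unit-speed leafwise geodesic crosses almost the full diameter of every plaque it meets before leaving it.'' No covering has this property: even for a perfectly round Euclidean ball, a geodesic entering near the boundary traverses a short chord, not a diameter, so reading the word off chart-exit times of a fixed covering still only gives $k\lesssim R/\delta$ with $\delta$ a Lebesgue number possibly far below $\diam(\u)$. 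The paper's remedy is different in kind: it does not follow the curve through a fixed covering but chooses the chain adaptively, taking a $\beta$-dense set $\z$ of centers, marking the equally spaced points $x_k=\gamma(kr/n)$, and selecting for each $x_k$ a chart $U_{z_k}$ whose center $z_k\in\z$ is $\beta$-close to $x_k$; Lemmas \ref{lem:GeometricVsPseudogroupLemma1} and \ref{lem:GeometricVsPseudogroupLemma2} then guarantee that the geodesic segment from $x_k$ to $x_{k+1}$ stays inside $B_F(\xi_1,\frac{\Delta}{\Lambda})\cup B_F(\xi_2,\Delta)$, so that $(U_{z_0},\dots,U_{z_n})$ is a genuine chain of length $n$ for a curve of length $R=(1+\frac{1}{\Lambda})(1-\eta)n\Delta$.

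A second point your proposal does not engage with is that the asymmetry cannot simply be symmetrized away by always writing $\max\{d(a,b),d(b,a)\}$: it enters the exchange rate itself. The charts of the efficient covering are $U_z=\bigcup_{y\in T_z}B_F\left(y,\frac{\Delta}{\Lambda}\right)$, whose forward reach from the transversal is only $\frac{\Delta}{\Lambda}$, while their $d_\f$-diameter (the asymmetric supremum appearing in the statement of the theorem) is $(1+\frac{1}{\Lambda})\Delta$; reconciling these two numbers is exactly what Lemma \ref{lem:GeometricVsPseudogroupLemma2} does, by splitting a minimal geodesic of length less than $\frac{2\Delta}{\Lambda}$ into a forward piece of length $\frac{\Delta}{\Lambda}$ from one endpoint and a piece at backward distance at most $\Delta$ from the other. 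Without this, the constant relating $R$ to $n$ would not be comparable to $\diam(\u_\Delta)$ and the factor $\frac{1}{\diam(\u)}$ in the theorem would not come out. So the skeleton of your argument is right, but the one step you yourself flag as ``the hard part'' is resolved in the paper by an idea --- adaptive recentering along the curve plus the $\Lambda$-splitting lemma --- that your proposal does not contain.
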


We here repeat the proof of Theorem 3.4 of \cite{bib:GhyLanWal88} with the necessary changes due to the fact that the metric induced by the Finsler structure is asymmetric. In the proof, $\Lambda = \max_{v\in T \f\setminus \{0\} }\; \frac{F(v)}{F(-v)}$.

\begin{lem}\label{lem:GeometricVsPseudogroupLemma1}
For $\Delta$ and $\rho$ small enough, there exists $\tilde\beta>0$ such that $\rho>\tilde\beta$ and the following is satisfied:

Let $x_1,x_2$ be two points lying on the same leaf $L$ and such that $d_{\f}(x_1,x_2)= \frac{2\Delta}{\Lambda} - \alpha$ for some $\alpha>0$. Let $y_1,y_2$ be two points of transversals $T_1$ and $T_2$ passing through $x_1$ and $x_2$, respectively, and lying on the same plaque with diameter not exceeding $4\Delta$. If
\[
\max\{ d(x_1,y_1), d(y_1,x_1), d(x_2,y_2), d(y_2,x_2)\}<\tilde\beta,
\]
then $d_{\f}(y_1,y_2)\leq \frac{2\Delta}{\Lambda} - \frac{\alpha}{2}$.
\end{lem}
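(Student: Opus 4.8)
The plan is to reduce the claim to the leafwise triangle inequality $d_{\f}(y_1,y_2)\le d_{\f}(x_1,x_2)+\tfrac{\alpha}{2}$, since the hypothesis $d_{\f}(x_1,x_2)=\tfrac{2\Delta}{\Lambda}-\alpha$ then yields the desired bound once the excess term is at most $\tfrac{\alpha}{2}$. First I would fix a leafwise curve $\gamma\colon[0,1]\to L$ from $x_1$ to $x_2$ whose length $l(\gamma)$ is within $\delta$ of $d_{\f}(x_1,x_2)$, with $\delta$ to be sent to $0$. The points $y_1,y_2$ lie on a common plaque, hence on a single leaf $L'$, and I would transport $\gamma$ to $L'$ by the orthogonal projection $p_{y_1}\gamma$ based at $y_1$, obtaining a leafwise curve $\gamma'$ on $L'$ from $y_1$ to $z:=p_{y_1}\gamma(1)$.

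Two uniform estimates, both consequences of the compactness of $M$ (bounded geometry of $\f$ together with smoothness of $F$), drive the argument. Since $l(\gamma)$ is close to $\tfrac{2\Delta}{\Lambda}$, which is small for $\Delta$ small, and the initial transverse displacement satisfies $\max\{d(x_1,y_1),d(y_1,x_1)\}<\tilde\beta$, the transverse distance between $\gamma$ and its projection grows by at most a fixed holonomy factor along a path of this bounded length; choosing $\Delta$ and $\tilde\beta$ small enough keeps it below $\varepsilon_0$, so $p_{y_1}\gamma$ is defined on all of $[0,1]$. On that range the Finsler norm along $L'$ differs from the norm along $L$ on corresponding tangent vectors by a factor $1+O(\tilde\beta)$, whence
\[
d_{\f}(y_1,z)\le l(\gamma')\le (1+O(\tilde\beta))\,l(\gamma)\le \Big(1+O(\tilde\beta)\Big)\Big(\tfrac{2\Delta}{\Lambda}-\alpha+\delta\Big).
\]
Here the implied constant depends only on $\Delta$ and the geometry of $(M,\f,F)$, so the excess over $\tfrac{2\Delta}{\Lambda}-\alpha$ is of order $O(\tilde\beta)+\delta$.

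It remains to compare the endpoint $z$ of the projected curve with $y_2$. The point $z$ is the orthogonal projection of $x_2$ onto $L'$, so $\max\{d(x_2,z),d(z,x_2)\}=O(\tilde\beta)$, while $\max\{d(x_2,y_2),d(y_2,x_2)\}<\tilde\beta$ by hypothesis; hence for $\tilde\beta$ small both $z$ and $y_2$ lie on one plaque of $L'$ inside a chart about $x_2$, where the leafwise and the ambient quasi-distances are uniformly comparable, giving $d_{\f}(z,y_2)=O(\tilde\beta)$. The asymmetry is harmless because the hypothesis controls all four ambient distances simultaneously. Combining through $d_{\f}(y_1,y_2)\le d_{\f}(y_1,z)+d_{\f}(z,y_2)$ yields
\[
d_{\f}(y_1,y_2)\le \tfrac{2\Delta}{\Lambda}-\alpha+\delta+O(\tilde\beta).
\]
Sending $\delta\to0$ and then taking $\tilde\beta$ small — in particular below $\rho$ and small relative to $\alpha$ — makes $\delta+O(\tilde\beta)\le\tfrac{\alpha}{2}$, which is the assertion. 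The factor $\Lambda$ enters only as bookkeeping: it guarantees $d_{\f}(x_2,x_1)\le\Lambda\,d_{\f}(x_1,x_2)<2\Delta$, so the whole configuration stays within the plaque scale $4\Delta$ on which the comparison constants above are valid.

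I expect the main obstacle to be the uniform length-distortion bound for the holonomy projection, together with the requirement that the projection remain defined along the entire curve. Both hinge on the smallness of $\frac{2\Delta}{\Lambda}$ and on replacing the symmetric estimates of \cite{bib:GhyLanWal88} by two-sided bounds for the asymmetric $d$; once these are in place, the endpoint comparison and the final choice of $\tilde\beta$ are routine.
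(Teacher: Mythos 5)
Your proposal is correct and follows essentially the same route as the paper's own proof: project a (near-)minimizing leaf curve from $x_1$ to $x_2$ onto the plaque through $y_1$, use compactness of $M$ to bound the length distortion of the projection and the leafwise distance from the projected endpoint to $y_2$ by quantities that vanish with $\tilde\beta$, and conclude by the (asymmetric) triangle inequality. You merely spell out a few steps the paper asserts in one line — existence of the projection along the whole curve, the use of an approximating rather than exactly minimizing curve, and the comparison of ambient and leafwise distances near $x_2$ — so no substantive difference.
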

\begin{proof}
Let $\gamma:[0,1]\to L$ be a curve linking $x_1$ with $x_2$ such that $l(\gamma)=\frac{2\Delta}{\Lambda}-\alpha$. Let $p_{y_1}\gamma$ be the orthogonal projection of $\gamma$ onto the plaque $P_{y_1}$. Since $M$ is compact, there exists $\tilde\beta>0$ such that $|l(\gamma)-l(p_{y_1}\gamma)|<\frac{\alpha}{4}$, and $\{d_{\f}(y_2,p_{y_1}\gamma(1)), d_{\f}(p_{y_1}\gamma(1),y_2)\}<\frac{\alpha}{4}$. Thus
\[
d_{\f}(y_1,y_2)\leq l(p_{y_1}\gamma)+d_{\f}(y_2,p_{y_1}\gamma(1))+d_{\f}(p_{y_1}\gamma(1),y_2)\leq \frac{2\Delta}{\Lambda} - \frac{\alpha}{2}.
\]
This ends our proof.
\end{proof}

Let $\rho>0$, and let $S_x = \exp B^{\perp}(0_x,2\rho)$ be the image in the exponential map $\exp$ on $M$ where $B^{\perp}$ is a ball centered in $0_x$ and contained in the orthogonal complement $T_x\f^{\perp}$ of $T_x\f$. Set
\[
T_x = \exp B^{\perp} (0,\rho),\quad U_x = \bigcup_{y\in T_x} B_F\left(y, \frac{\Delta}{\Lambda}\right).
\] 

\begin{lem}\label{lem:GeometricVsPseudogroupLemma2}
Let $\z=\{z_1,\dots,z_N\}$ be a $\beta$-dense subset of $M$, $\beta<\frac{\tilde\beta}{10}$. Let $x_1$ and $x_2$ be two points of the same leaf with $d_{\f}(x_1,x_2)<\frac{2\Delta}{\Lambda} - \alpha$. Let $z\in\z$ (resp. $z'\in\z$) be an $\beta$-close point of $x_1$ (resp. $x_2$). Then the subsets $U_z$ and $U_{z'}$ have the following property:

If $\xi_1\in T_z$ and $\xi_2\in T_{z'}$ lie on the same plaque with diameter not exceeding $4\Delta$ and
\[
 \max\{ d(\xi_1,x_1), d(x_1,\xi_1), d(\xi_2,x_2), d(x_2,\xi_2)\}<\beta,
\]
then the minimal leaf geodesic in $L_{\xi_1}=L_{\xi_2}$ linking $\xi_1$ and $\xi_2$ is contained in the sum
\[
B_F\left(\xi_1,\frac{\Delta}{\Lambda}\right)\cup B_F(\xi_2,\Delta).
\]

\end{lem}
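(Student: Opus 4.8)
The plan is to reduce the statement to a one-dimensional covering problem along the minimal leaf geodesic; the whole difficulty is bookkeeping the Finsler asymmetry through the constant $\Lambda$. First I would control the leafwise length of the geodesic, and then show that its initial portion lies in the first ball and its terminal portion in the second.

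\textbf{Step 1 (length bound).} The first goal is to prove that $d_{\f}(\xi_1,\xi_2)<\frac{2\Delta}{\Lambda}$. One cannot simply apply the triangle inequality to the chain $\xi_1,x_1,x_2,\xi_2$, since $\xi_i$ and $x_i$ need not lie on a common leaf and the leafwise distance between them may be large or undefined; only the full distance is controlled by $\beta$. Instead I would invoke Lemma~\ref{lem:GeometricVsPseudogroupLemma1} with $y_1=\xi_1$, $y_2=\xi_2$: the hypothesis $\beta<\frac{\tilde\beta}{10}<\tilde\beta$ together with the assumption that $\xi_1,\xi_2$ lie on one plaque of diameter at most $4\Delta$ matches exactly the hypotheses of that lemma. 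Writing $d_{\f}(x_1,x_2)=\frac{2\Delta}{\Lambda}-\alpha'$ with $\alpha'\geq\alpha>0$ then yields $d_{\f}(\xi_1,\xi_2)\leq\frac{2\Delta}{\Lambda}-\frac{\alpha'}{2}<\frac{2\Delta}{\Lambda}$.

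\textbf{Step 2 (covering the geodesic).} Let $\sigma:[0,L]\to L_{\xi_1}$ be the minimal leaf geodesic from $\xi_1$ to $\xi_2$, parametrized by $F$-arclength, so that $L=l(\sigma)=d_{\f}(\xi_1,\xi_2)<\frac{2\Delta}{\Lambda}$. For $p=\sigma(s)$ the sub-arc $\sigma|_{[0,s]}$ is a forward leaf curve of length $s$, giving $d_{\f}(\xi_1,p)\leq s$ with no asymmetry cost, since both this arc and the first ball are oriented forward from $\xi_1$. For the other endpoint the sub-arc $\sigma|_{[s,L]}$ has length $L-s$, but it runs from $p$ to $\xi_2$; reversing it and using $F(-v)\leq\Lambda F(v)$ on $T\f$ produces a forward curve from $\xi_2$ to $p$ of length at most $\Lambda(L-s)$, whence $d_{\f}(\xi_2,p)\leq\Lambda(L-s)$. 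As the full distance is dominated by the leafwise one, it now suffices to show that for every $s\in[0,L]$ either $s<\frac{\Delta}{\Lambda}$ or $\Lambda(L-s)<\Delta$. The first inequality places $p$ in $B_F(\xi_1,\frac{\Delta}{\Lambda})$, the second in $B_F(\xi_2,\Delta)$, and the two ranges $[0,\frac{\Delta}{\Lambda})$ and $(L-\frac{\Delta}{\Lambda},L]$ cover the whole interval $[0,L]$ precisely because $L<\frac{2\Delta}{\Lambda}$.

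\textbf{Main obstacle.} The crux is the reversal estimate: the bound one naturally reads off the geodesic is on the \emph{backward} quantity $d_{\f}(p,\xi_2)$, whereas membership in the forward ball $B_F(\xi_2,\Delta)$ is governed by $d_{\f}(\xi_2,p)$, and passing between them costs a factor $\Lambda$. This is exactly why the second ball must be taken of radius $\Delta=\Lambda\cdot\frac{\Delta}{\Lambda}$ rather than $\frac{\Delta}{\Lambda}$: with the symmetric radius the interval $(L-\frac{\Delta}{\Lambda^2},L]$ would fail to meet $[0,\frac{\Delta}{\Lambda})$ whenever $\Lambda>1$, and the two balls would leave a gap in the middle of $\sigma$. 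Keeping the orientations straight, and verifying that $\beta<\frac{\tilde\beta}{10}$ is small enough to invoke Lemma~\ref{lem:GeometricVsPseudogroupLemma1} with $\xi_i$ in place of $y_i$, are the only genuinely delicate points; the remainder is the elementary interval-covering computation above.
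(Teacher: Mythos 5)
Your proposal is correct and follows essentially the same route as the paper: apply Lemma~\ref{lem:GeometricVsPseudogroupLemma1} to get $d_{\f}(\xi_1,\xi_2)<\frac{2\Delta}{\Lambda}$, then split the minimal geodesic at the point where the forward distance from $\xi_1$ reaches $\frac{\Delta}{\Lambda}$ and use $F(-v)\leq\Lambda F(v)$ to convert the remaining backward bound into membership in $B_F(\xi_2,\Delta)$. Your Step~2 is in fact slightly more careful than the paper's, which only checks the single transition point $\gamma(t)$ rather than every point of the geodesic.
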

\begin{proof}
By Lemma \ref{lem:GeometricVsPseudogroupLemma1}, $d_{\f}(\xi_1,\xi_2)< \frac{2\Delta}{\Lambda} -\frac{\alpha}{2}$. So, there exists a curve $\gamma:[0,1]\to L_{\xi_1}$ such that $\gamma(0)=\xi_1$, $\gamma(0)=\xi_2$, and $l(\gamma) = d_{\f} (\xi_1,\xi_2)$. Let $t\in[0,1]$ be such a number that $d_{\f}(\xi_1,\gamma(t))=\frac{\Delta}{\Lambda}$. Then $d_{\f}(\gamma(t),\xi_2)\leq\frac{\Delta}{\Lambda}$. Since $d_{\f}$ is asymmetric then $d_{\f}(\xi_2, \gamma(t))\leq\Delta$.
\end{proof}

\begin{proof}[Proof of Theorem \ref{thm:GeometricVsPseudogroup}] To begin, let $\epsilon>0$, and $x,y \in T_U$, $U\in \u$, be $(n,\varepsilon)$-separated with respect to $h(\h_{\u}, \h^1_{\u})$. Then there exists a chain of maps $(U_1,\dots,U_n)$ such that the corresponding chains of plaques $(P_1,\dots, P_n)$ and $(Q_1,\dots,Q_n)$ with $x\in P_1$, $y\in Q_1$, $P_i, Q_i\in U_i$, $P_i\cap P_{i+1}\neq\emptyset$, $Q_i\cap Q_{i+1}\neq\emptyset$ satisfy 
\[
\max\{d(x_n, y_n), d(y_n, x_n)\}\geq \varepsilon
\]
where $x_n\in P_n\cap T_{U_n}$ and $y_n\in Q_n\cap T_{U_n}$ are the images in the holonomy map determined by $(U_1,\dots,U_n)$ of $x$ and $y$, respectively. Let $x_0=x$ and let us choose points $x_i\in P_i\cap P_{i+1}$, $i=1,\dots, n-1$. Link the points $x_i$ and $x_{i+1}$ by a leaf geodesic $\gamma_i$, $i=1,\dots, n-1$. The length of every $\gamma_i$ is smaller than $\diam(\u)$, and the length of a curve $\gamma$ built of $\gamma_i$'s and linking $x_0$ with $x_n$ is smaller than $n\cdot \diam(\u)$.  Shortening $\gamma$, if necessary, we can assume that the distance between $\gamma$ and its orthogonal projection $p_y\gamma$ is always smaller than $\varepsilon_0$, and the whole $\gamma$ can be projected to $L_y$. 

Since $T=\bigsqcup T_U$ is compact, there exists a constant $C>0$ such that 
\[
  \frac{1}{C} d(z,w) \leq d(z,p(z)) \leq C d(z,w)
\]
and 
\[
  \frac{1}{C} d(w,z) \leq d(p(z),z) \leq C d(w,z)
\]
if only $z,w\in T_U$, $U\in \u$, and $p(z)$ is the orthogonal projection of $z$ to the plaque $P_w$ passing through $w$. Hence $d(\gamma(1), p_y\gamma(1)) \geq \frac{\varepsilon}{C}$. This gives that $x$ and $y$ are $(n\cdot\diam(\u),\frac{\varepsilon}{C})$-separated with respect to $\f$. Thus,
\[
s(n,\varepsilon, \h_{\u}^1) \leq s\left(n\cdot \diam(\u),\frac{\varepsilon}{C},\f\right)
\]
for all $n\in\nn$, and $\varepsilon\in (0,\varepsilon_0)$. Finally,
\[
h(\h_{\u}, \h_{\u}^1) \leq \diam(\u)\cdot  h(\f,F).
\]

Let $\eta>0$, and $\Delta>0$ be such that the leafwise exponential mapping $\expf$ maps the balls $B^{\f}(0_x,4\Delta)$, where $B^{\f}(0_x,r)=\{v\in T_x\f: F(v)< r\}$, diffeomorphically onto strictly convex balls 
\[
B_F(x,4\Delta)=\{y\in L_x: d_{\f}(x,y)< 4\Delta\},\quad x\in M.
\]
Note that for small enough $\rho$ and $\Delta$, the sets $U_x$ are the domains of distinguished charts, and for any plaque $P\subset U_x$, the diameter $\diam(P_x)\leq (1+\frac{1}{\Lambda})\Delta$.

Let $\u_{\Delta}= \{U_z,z\in\z\}$. We may assume that the closures $\bar U_z$ and $\bar U_{z'}$, $z,z'\in \z$, overlap if only $\bar U_z$ and $\bar U_{z'}$ do. Thus $\u_\Delta$ is a nice covering of $(M,\f)$. Moreover, $\diam(\u_{\Delta})\leq (1+\frac{1}{\Lambda})\Delta$.

Let $\varepsilon>0$, and let $x,y$ be such that 
\[
\max\{ d(x,y), d(y,x)\}\leq\varepsilon
\]
and additionally they are $(R,\varepsilon)$-separated by $\f$ with respect to $F$. Hence, there exists a curve $\gamma:[0,R]\to L_{x}$ starting at $x$ with $l(\gamma)\leq R$ and such that $p_y\gamma$ is well defined on $[0,r]$, $r<R$, and $\max\{ d(\gamma(r),p_y\gamma(y)), d(p_y\gamma(y),\gamma(r))\}\geq\varepsilon$. Let us assume that $R=(1+\frac{1}{\Lambda})(1-\eta)n\Delta$, and let $x_k=\gamma(\frac{kr}{n})$, $k=0,\dots,n$. For each $x_k$ let us find a point $z_k\in\z$ which is $\beta$-close (see Lemma \ref{lem:GeometricVsPseudogroupLemma2}). 

The charts $(U_{z_0},\dots,U_{z_n})$ form a chain along $\gamma|_{[0,r]}$, and the corresponding holonomy map $h\in\h_{\u_{\Delta}}$ is well defined on the plaques $P,Q\in U_{z_0}$ containing $x$ and $y$, respectively. Moreover,
\[
\max\{d(h(Q),h(P)), d(h(P),h(Q))\}\geq C\cdot \varepsilon,
\]
where $C$ is the constant from the first part of this proof. We deduce that
\[
s\left(\left(1+\frac{1}{\Lambda}\right)(1-\eta)n\Delta,C\varepsilon,\f\right)\leq N(\varepsilon)\cdot s(n,\varepsilon,\h_{\u_{\Delta}}),
\]
with $N(\varepsilon)$ being the minimal cardinality of a covering of $M$ by balls of radius $\varepsilon$. Therefore,
\[
s(C\varepsilon,\f) \leq \frac{1}{(1+\frac{1}{\Lambda})(1-\eta)\Delta} s(\varepsilon,\h_{\u_{\Delta}}).
\]
Passing with $\eta$ to zero, we obtain
\[
h(\f,F)\leq \frac{1}{(1+\frac{1}{\Lambda})\Delta} h(\h_{\u_{\Delta}},\h_{\u_{\Delta}}^1)\leq \frac{1}{\diam(\u)} h(\h_{\u_{\Delta}},\h_{\u_{\Delta}}^1).
\]
This ends the proof.
\end{proof}

\section{Foliations with leafwise Randers norm}

Let $(M,\f,g)$ be a foliated Riemannian manifold. Let $F$ be a leafwise Randers norm, that is the norm given on leaves by
\[
  F(v) = \sqrt{g(v,v)}+\beta(v),\quad v\in T\f.
\]
Let $\|\beta\| = \max\limits_{v\in T^1_g\f} \beta(v)$. Let us suppose, similarly as in Example \ref{ex:Randers norm}, that $\|\beta\|<1$.

\begin{thm}\label{thm:Randers entropy}
The following inequalities hold:
$$\frac{1}{1+\|\beta\|}h(\f,g) \leq h(\f,F) \leq \frac{1}{1-\|\beta\|} h(\f,g).$$
\end{thm}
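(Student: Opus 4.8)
The plan is to reduce the whole statement to a pointwise comparison between the Randers norm $F$ and the Riemannian norm $\alpha(v)=\sqrt{g(v,v)}$ on $T\f$, and then to propagate that comparison through the length functional and the definition of $(R,\varepsilon)$-separation. Since $\|\beta\|=\max_{v\in T^1_g\f}\beta(v)$ and $\beta(-v)=-\beta(v)$, a one-line argument on unit vectors gives $|\beta(v)|\leq\|\beta\|\,\alpha(v)$ for every $v\in T\f$, and hence
\[
(1-\|\beta\|)\,\alpha(v)\leq F(v)=\alpha(v)+\beta(v)\leq(1+\|\beta\|)\,\alpha(v),\qquad v\in T\f.
\]
Integrating along an arbitrary leafwise curve $\gamma$ yields the corresponding comparison of lengths,
\[
(1-\|\beta\|)\,l_g(\gamma)\leq l_F(\gamma)\leq(1+\|\beta\|)\,l_g(\gamma),
\]
where $l_g$ and $l_F$ denote the length computed with $\alpha$ and with $F$, respectively.

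The key organizing remark is that $F=\sqrt{F_\f^2+g}$ and the ambient Riemannian norm share the same transverse part $g|_{T\f^\perp}$, so I would use the \emph{same} transverse quasi-metric $d$ when measuring $\varepsilon$-separation in both settings; the two notions of $(R,\varepsilon)$-separation then differ \emph{only} in the bound imposed on the leafwise length of the witnessing curve. For the upper bound, take a set $A\subset T$ that is $(R,\varepsilon)$-separated with respect to $F$. For a separated pair the witnessing leaf curve $\gamma$ has $l_F(\gamma)\leq R$, hence $l_g(\gamma)\leq R/(1-\|\beta\|)$, so the same $\gamma$ and the same transverse displacement witness $(R/(1-\|\beta\|),\varepsilon)$-separation with respect to $g$ (the alternative clause $\max\{d(x,y),d(y,x)\}\geq\varepsilon_0$ is metric-independent). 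Writing $s_F$ and $s_g$ for the separation numbers, this gives
\[
s_F(R,\varepsilon,\f)\leq s_g\!\left(\tfrac{R}{1-\|\beta\|},\varepsilon,\f\right).
\]
Dividing by $R$, substituting $R'=R/(1-\|\beta\|)$ and passing to the $\limsup$ produces $s_F(\varepsilon,\f)\leq\frac{1}{1-\|\beta\|}s_g(\varepsilon,\f)$, and letting $\varepsilon\to0^+$ gives $h(\f,F)\leq\frac{1}{1-\|\beta\|}h(\f,g)$.

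The lower bound is entirely symmetric, using the reverse length inequality: an $(R,\varepsilon)$-separated set with respect to $g$ is, via $l_F(\gamma)\leq(1+\|\beta\|)\,l_g(\gamma)$, also $((1+\|\beta\|)R,\varepsilon)$-separated with respect to $F$, so that $s_g(R,\varepsilon,\f)\leq s_F((1+\|\beta\|)R,\varepsilon,\f)$. The same change of variable in the $\limsup$ followed by the limit in $\varepsilon$ then yields $h(\f,g)\leq(1+\|\beta\|)\,h(\f,F)$, which is the left-hand inequality of the theorem.

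The one genuinely delicate point — and the step I would verify most carefully — is the claim that replacing $g$ by $F$ on $T\f$ leaves the transverse $\varepsilon$-separation unaffected. This rests on the fact that the projection $p_y\gamma$ and the quantity $\max\{d(\gamma(1),p_y\gamma(1)),d(p_y\gamma(1),\gamma(1))\}$ depend only on the transverse geometry, which is common to both structures; the second Remark (independence of $h(\f,F)$ from the choice of the Riemannian part) makes this rigorous, at worst at the cost of an auxiliary constant $a$ that rescales $\varepsilon$ to $\varepsilon/a$ and therefore disappears in the limit $\varepsilon\to0^+$. Crucially, this distortion is purely transverse and does not touch the leafwise length, so the sharp factors $\frac{1}{1\pm\|\beta\|}$ — which arise solely from the length comparison — are preserved. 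Everything else is a routine change of variables in the definition of $s(\varepsilon,\f)$.
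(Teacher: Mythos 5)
Your proof is correct and follows essentially the same route as the paper: the pointwise comparison $(1-\|\beta\|)\,\alpha(v)\leq F(v)\leq(1+\|\beta\|)\,\alpha(v)$, integration along the witnessing leaf curve, and a change of variables in the $\limsup$ defining $s(\varepsilon,\f)$. If anything you are slightly more careful than the paper about why the transverse $\varepsilon$-separation is unaffected by swapping the leafwise norm, but the substance of the argument is identical.
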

\begin{proof}
Let $G(v)=\sqrt{g(v,v)}$. Since $F(v) = G(v)+\beta(v)$ then for any $v\in T\f$
\begin{equation}\label{eq:Randers entropy}
F(v)\leq G(v)+\|\beta\|G(v)\textrm{ and } G(v)\leq F(v)+\|\beta\|G(v).
\end{equation}
Let $x,y$ be $(R,\varepsilon)$-separated with respect to $g$. So, there exists a curve $\gamma:[0,1]\to L_x$ such that $\gamma(0)=x$, $l_G(\gamma)\leq R$ and 
\[
d(\gamma(1),p_y\gamma(1))\geq \varepsilon.
\]
Using the first inequality in (\ref{eq:Randers entropy}), we obtain
\begin{align*}
l_{F}(\gamma) &= \int_0^1 F(\dot{\gamma}(t))dt \leq \int_0^1 G(\dot{\gamma}(t))dt + \int_0^1 \|\beta\|G(\dot{\gamma}(t))dt\\
&\leq R+\|\beta\|R = (1+\|\beta\|)R.
\end{align*}
Thus $x,y$ are $((1+\|\beta\|)R,\varepsilon)$-separated with respect to $F$. Hence,
\[
s(R,\varepsilon,g) \leq s((1+\|\beta\|)R,\varepsilon,F),
\]
\[
\frac{1}{R} \log s(R,\varepsilon,g) \leq \frac{1+\|\beta\|}{1+\|\beta\|}\frac{1}{R} \log s((1+\|\beta\|)R,\varepsilon,F),
\]
\begin{align*}
&\limsup_{R\to\infty}\frac{1}{R} \log s(R,\varepsilon,g) \\
&\leq (1+\|\beta\|) \limsup_{R\to\infty} \frac{1}{(1+\|\beta\|)R}\log s((1+\|\beta\|)R,\varepsilon,F),
\end{align*}
\[
s(\varepsilon,\f,g) \leq (1+\|\beta\|) s(\varepsilon,\f,F).
\]
Finally, $h(\f,g)\leq (1+\|\beta\|) h(\f,F)$.

The second inequality follows directly from the second inequality in (\ref{eq:Randers entropy}) and from the fact that every two points which are $(R,\varepsilon)$-separated with respect to $F$ are $(\frac{R}{1-\|\beta\|},\varepsilon)$-separated with respect to $g$.
\end{proof}

\section{Topological entropy of one dimensional foliation}

We will now recall the definition (following \cite{bib:Bow71} and \cite{bib:SayMolMog2014}) of the topological entropy of a uniformly continuous map on a quasi-metric space. 

Let $f:X\rightarrow X$ be a uniformly continuous transformation of a quasi-metric space $X$, that is, for any $\varepsilon>0$  and any $x\in X$ there exists $\delta>0$ such that for any $y\in X$
\[
\max\{d(x,y), d(y,x)\}<\delta \Rightarrow \max\{(d(f(x),f(y)), d(f(y),f(x))\}<\epsilon.
\]

For any $n\in\mathbb{N}$ and $x,y\in X$ let 
\[
d_n(x,y)=\max\limits_{0\leqslant k\leqslant n-1}\{\max\{d(f^k(x),f^k(y)), d(f^k(y),f^k(x))\}\},\; k\in\mathbb{N} .
\]
Let $n\in \mathbb{N}$ and $\varepsilon>0$. A subset $A$ of $X$ is said to be \emph{$(n,\varepsilon)$-separated} if $d_n(x,y)>\varepsilon$ for every $x,y\in A, x\neq y$. A set $B\subset X$ is said to \emph{$(n,\varepsilon)$-span} another set $K$ if for every $x\in K$ there is $y\in B$ such that $d_n(x,y)\leq\varepsilon$.

We set $s(n,\varepsilon, K)=\max\{\#A:A\subset K \;\textrm{is}\; (n,\varepsilon)-\textrm{separated}\}$, and $r(n,\varepsilon, K)=\min\{\#A:A\subset X \textrm{ is } (n,\varepsilon)-\textrm{spanning } K\}$.

\begin{lem}\label{lem:SeparatedVsSpanning}
The following inequalities hold
\begin{enumerate}
\item $r(n,\varepsilon,K)\leq s(n,\varepsilon,K) \leq r(n,\frac{\varepsilon}{2},K)<\infty$,
\item for $\varepsilon'<\varepsilon$
\[
  r(\varepsilon',K) \geq r(\varepsilon,K) \textrm{ and } s(\varepsilon',K) \geq s(\varepsilon,K).
\]
\end{enumerate}
\end{lem}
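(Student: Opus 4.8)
The plan is to prove the standard comparison inequalities between separated and spanning sets, exactly as in the classical Bowen theory, but being careful that the quasi-metric $d$ and its associated $d_n$ are asymmetric, so every distance comparison must be made through the symmetrized quantity $\max\{d(\cdot,\cdot),d(\cdot,\cdot)\}$ that already appears in the definition of $d_n$. Since $d_n$ is built as a maximum over orbit segments of these symmetrized distances, $d_n$ itself is symmetric in its two arguments even though $d$ is not, which means the triangle-inequality manipulations below go through with no essential change from the metric case.

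For part (1), I would argue the two inequalities separately. For the left inequality $r(n,\varepsilon,K)\leq s(n,\varepsilon,K)$, I take a maximal $(n,\varepsilon)$-separated subset $A\subset K$; maximality means no point of $K$ can be added without destroying separation, so every $x\in K$ satisfies $d_n(x,y)\leq\varepsilon$ for some $y\in A$, i.e.\ $A$ itself $(n,\varepsilon)$-spans $K$. Hence $r(n,\varepsilon,K)\leq\#A\leq s(n,\varepsilon,K)$. For the right inequality $s(n,\varepsilon,K)\leq r(n,\frac{\varepsilon}{2},K)$, I take an $(n,\varepsilon)$-separated set $A$ and an $(n,\frac{\varepsilon}{2})$-spanning set $B$ of minimal cardinality, and define a map $A\to B$ sending each $a\in A$ to some $b\in B$ with $d_n(a,b)\leq\frac{\varepsilon}{2}$. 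If two distinct points $a,a'\in A$ mapped to the same $b$, then combining $d_n(a,b)\leq\frac{\varepsilon}{2}$ and $d_n(a',b)\leq\frac{\varepsilon}{2}$ through the triangle inequality for $d_n$ (which holds because each symmetrized term obeys the quasi-metric triangle inequality) would force $d_n(a,a')\leq\varepsilon$, contradicting separation. So the map is injective and $\#A\leq\#B$. Finiteness $r(n,\frac{\varepsilon}{2},K)<\infty$ follows from compactness of $K$ (here $K\subset T$ with $T$ compact) together with the uniform continuity of $f$, which guarantees that the $d_n$-balls of radius $\frac{\varepsilon}{2}$ form an open cover admitting a finite subcover.

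For part (2), the monotonicity in $\varepsilon$ is essentially definitional. If $\varepsilon'<\varepsilon$, then any $(n,\varepsilon)$-separated set is a fortiori $(n,\varepsilon')$-separated, since $d_n(x,y)>\varepsilon$ implies $d_n(x,y)>\varepsilon'$; taking maxima gives $s(\varepsilon',K)\geq s(\varepsilon,K)$. Dually, any set that $(n,\varepsilon')$-spans $K$ also $(n,\varepsilon)$-spans it, since $d_n(x,y)\leq\varepsilon'$ implies $d_n(x,y)\leq\varepsilon$; taking minima over spanning sets reverses the inequality to give $r(\varepsilon',K)\geq r(\varepsilon,K)$.

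The only genuine obstacle is bookkeeping the asymmetry of $d$ in the triangle-inequality step of the right-hand inequality of part (1). One must verify that $d_n$ satisfies $d_n(a,a')\leq d_n(a,b)+d_n(b,a')$; this reduces to checking, for each coordinate $k$, that $\max\{d(f^k(a),f^k(a')),d(f^k(a'),f^k(a))\}$ is bounded by the sum of the corresponding symmetrized terms for the pairs $(a,b)$ and $(b,a')$, which in turn follows coordinatewise from the two one-sided quasi-metric triangle inequalities $d(u,w)\leq d(u,v)+d(v,w)$ and $d(w,u)\leq d(w,v)+d(v,u)$. I expect everything else to be routine.
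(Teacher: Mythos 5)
Your proposal is correct and follows essentially the same route as the paper's proof: maximality of a separated set gives spanning for the left inequality, an injective assignment into a minimal spanning set via the triangle inequality for $d_n$ gives the right inequality, uniform continuity plus compactness gives finiteness, and part (2) is definitional. Your explicit verification that the symmetrized $d_n$ inherits the triangle inequality from the two one-sided quasi-metric inequalities is a detail the paper leaves implicit, but it is the same argument.
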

\begin{proof}
If $A$ is maximal $(n,\varepsilon)$-separated subset of $K$, then $A$ also $(n,\varepsilon)$-spans $K$. Thus $r(n,\varepsilon,K) \leq s(n,\varepsilon,K)$.

Let $A\subset K$ be an $(n,\varepsilon)$-separated set and let $B$ $(n,\frac{1}{2}\varepsilon)$-spans $K$. For any $x\in K$, there exists $g(x)\in B$ such that $d_n(x,g(x)) <\frac{\varepsilon}{2}$. Moreover, if $g(x)=g(y)$ then 
$d_n(x,y) <\varepsilon$.
Thus $g$ is injective on $A$ (since $A$ is $(n,\varepsilon)$-separated), and 
$s(n,\varepsilon,K) \leq r(n,\frac{\varepsilon}{2},K)$.

As $f$ is uniformly continuous on $(X,d)$ there is a $\delta>0$ such that 
$d_n(x,y) <\frac{\varepsilon}{2}$ if only $d(x,y)<\delta$ and $d(y,x)<\delta$. Thus $r(n,\frac{\varepsilon}{2},K)$ does not exceed the number of $\delta$-balls $B_{\delta}(z) = \{z'\in X: d(z,z')<\delta\textrm{ and }d(z',z)<\delta\}$ needed to cover $K$. So, $r(n,\frac{\varepsilon}{2},K)$ is finite, as $K$ is compact.

The inequalities in (2) are obvious.
\end{proof}

Finally, we define
\[
s(\varepsilon,K)=\lim_{n\rightarrow\infty}\sup\frac{1}{n}\log s(n,\varepsilon, K)
\]
and
\[
r(\varepsilon,K)=\lim_{n\rightarrow\infty}\sup\frac{1}{n}\log r(n,\varepsilon, K).
\]

\begin{definition}\label{def:TopologicalEntropy}
For any uniformly continuous map $f:X\to X$ on a quasi-metric space $(X,d)$ and any compact set $K\subset X$ define
\[
\htop(f,K)=\lim_{\varepsilon\rightarrow 0^{+}}s(\varepsilon,K)=\lim_{\varepsilon\rightarrow 0^{+}}r(\varepsilon,K)
\]
and
\[
\htop(f)=\sup_{K\textrm{  compact}} \htop(f,K).
\]
The number $\htop(f)$ is called the \emph{topological entropy of $f$}.
\end{definition}

Let us now study the geometrical entropy of a foliation given by the integral curves of a vector field $X$ on a compact manifold $M$. Note that any Finsler norm on 1-dimensional vector space is a Randers norm. Indeed, let
\[
 G(v) = \frac{1}{2}(F(v)+F(-v)) \quad, \beta(v) = \frac{1}{2}(F(v)-F(-v)).
\]
Then $G$ is a norm associated with a inner product $g$, while $\beta$ is a $1$-form. Moreover, $F(v) = G(v)+\beta(v)$ and $\|\beta\|_G < 1$.

Let $F=G+\beta$ be a leafwise Randers norm for which $X$ is a $G$-unit vector field, that is $G(X(p))=1$ for all $p\in M$. Let $\varphi=(\varphi_t:M\to M)_{t\in\rr}$ denote the flow of $X$. We recall \cite{bib:Wal04} that the topological entropy of a flow is equal to $\htop(\varphi_1)$.

\begin{thm}  If $\Lambda = \max\limits_{v\in T\f\setminus\{0\} } \frac{F(v)}{F(-v)}$ then
\[
\left(1+\frac{1}{\Lambda}\right) \htop (\varphi)\leq h(\f, F) \leq (1+\Lambda)\htop (\varphi).
\]
\end{thm}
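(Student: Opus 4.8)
The plan is to compare the geometric entropy $h(\f,F)$ of the one-dimensional foliation $\f$ with the topological entropy of the time-one map $\varphi_1$ by relating leafwise $F$-length to the time parameter of the flow. Since $X$ is a $G$-unit vector field, a leaf geodesic traced out by the flow over a time interval of length $t$ has $G$-length exactly $t$, and its $F$-length lies between $\frac{t}{\Lambda}(1+\frac{1}{\Lambda})^{-1}$-type bounds; more precisely, using $F(v)=G(v)+\beta(v)$ and the definition of $\Lambda$, I would first establish the pointwise estimate
\[
\left(1+\frac{1}{\Lambda}\right)t \;\le\; l_F\bigl(\varphi_{[0,t]}(x)\bigr)\;\le\;(1+\Lambda)\,t,
\]
valid because along the positively-oriented unit-speed curve $F(X)=1+\beta(X)$ and $F(-X)=1-\beta(X)$, so the ratio $\Lambda=\frac{1+\|\beta\|}{1-\|\beta\|}$ controls how $F$-length and flow-time differ. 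This is the key dictionary that converts the length constraint $l_F(\gamma)\le R$ in the definition of $(R,\varepsilon)$-separation into a time constraint $0\le t\le n$ of the form used in Definition \ref{def:TopologicalEntropy}.

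Next I would match the two separation notions. A point of the transversal $T$ together with the leaf through it is essentially an orbit of $\varphi$, and the orthogonal projection $p_y\gamma$ used in the geometric definition plays the same role as comparing the orbits of $x$ and $y$ under the iterates $\varphi_1^k$. The plan is to show: if $x,y$ are $(n,\varepsilon)$-separated for $\varphi_1$ in the sense of Definition \ref{def:TopologicalEntropy}, then there is some $k\le n-1$ with $\max\{d(\varphi_1^k x,\varphi_1^k y),d(\varphi_1^k y,\varphi_1^k x)\}>\varepsilon$; tracking the orbit of $x$ up to time $k\le n$ gives a leaf curve whose $F$-length is at most $(1+\Lambda)n$ and which $\varepsilon$-separates $x$ from $y$ in the foliated sense. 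This yields
\[
s(n,\varepsilon,K)\le s\bigl((1+\Lambda)n,\varepsilon',\f\bigr)
\]
for a suitably adjusted $\varepsilon'$ (using the constant $C$ from the proof of Theorem \ref{thm:GeometricVsPseudogroup} to pass between the transversal metric and the projection metric). Dividing by $n$, taking $\limsup$, and then $\varepsilon\to 0^+$, the factor $(1+\Lambda)$ appears in the denominator and produces the lower bound $(1+\frac{1}{\Lambda})\htop(\varphi)\le h(\f,F)$ after reconciling which direction of the length estimate feeds which inequality.

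For the reverse inequality I would run the same argument backwards: given $(R,\varepsilon)$-separated points for $\f$ with respect to $F$, the separating leaf curve has $F$-length at most $R$, hence by the lower bound in the length dictionary corresponds to a flow-time interval of length at most $\frac{R}{1+1/\Lambda}=\frac{\Lambda R}{\Lambda+1}$, so taking $n=\lceil \Lambda R/(\Lambda+1)\rceil$ iterates of $\varphi_1$ already separates the two points in the $d_n$-metric. This gives $s(R,\varepsilon,\f)\le s(n,\varepsilon',K)$ with $n\approx \frac{\Lambda}{\Lambda+1}R$, and the reciprocal of this ratio is exactly $1+\frac{1}{\Lambda}$, which upon rescaling delivers the upper bound $h(\f,F)\le(1+\Lambda)\htop(\varphi)$. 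The main obstacle, and the step I would handle most carefully, is the asymmetry of the quasi-metric $d$ and the interplay between the two separation formalisms: the geometric definition uses a \emph{one-sided} length bound $l(\gamma)\le R$ along a leaf curve, whereas the Bowen-type definition is symmetric in the orbit comparison and uses the max over both orderings of $d$. Keeping straight which of $F(v)$ and $F(-v)$ governs each length comparison — and ensuring that reversing orientation of the orbit is accounted for by the factor $\Lambda$ rather than silently dropped — is where the constants $1+\Lambda$ and $1+\frac{1}{\Lambda}$ genuinely come from, and where an incorrect bookkeeping would collapse the two bounds or give the wrong factor.
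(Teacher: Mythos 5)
Your overall strategy --- a direct translation between $F$-length along leaves and flow time, comparing $(R,\varepsilon)$-separated sets for $\f$ with Bowen-separated sets for $\varphi_1$ --- is viable in principle, but the ``dictionary'' you base everything on is false, and the constants it feeds into the comparison come out wrong. Since $X$ is $G$-unit, $F(X)=1+\beta(X)$, so
\[
(1-\|\beta\|)\,t\;\le\;l_F\bigl(\varphi_{[0,t]}(x)\bigr)=\int_0^t\bigl(1+\beta(X(\varphi_s(x)))\bigr)\,ds\;\le\;(1+\|\beta\|)\,t,
\]
and since $\Lambda=\frac{1+\|\beta\|}{1-\|\beta\|}$ these bounds equal $\frac{2t}{1+\Lambda}$ and $\frac{2\Lambda t}{1+\Lambda}$, not $(1+\frac{1}{\Lambda})t$ and $(1+\Lambda)t$; already for $\Lambda=1$ (the Riemannian case) your estimate would force $l_F(\varphi_{[0,t]})=2t$, whereas it equals $t$. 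What you are missing is the ``$2$'' coming from two-sidedness: a leaf curve of $F$-length at most $R$ issued from $x$ sweeps a time window $[-t_-,t_+]$ determined by $\int_0^{t_+}(1+\beta(X))\,ds=R$ and $\int_0^{t_-}(1-\beta(X))\,ds=R$, so that $t_++t_-$ lies between $\frac{2R}{1+\|\beta\|}=(1+\frac{1}{\Lambda})R$ and $\frac{2R}{1-\|\beta\|}=(1+\Lambda)R$, and one must recenter by $\varphi_{-t_-}$ (or at the time-midpoint, in the converse direction) to convert this two-sided window into a one-sided Bowen window. Because your dictionary is inverted, your bookkeeping yields the wrong inequalities: from $s(R,\varepsilon,\f)\le s(\lceil\Lambda R/(\Lambda+1)\rceil,\varepsilon',K)$ one gets $h(\f,F)\le\frac{\Lambda}{\Lambda+1}\htop(\varphi)<\htop(\varphi)$, which contradicts the theorem itself (and the identity $h(\f,g)=2\htop(\varphi)$); and $s(n,\varepsilon,K)\le s((1+\Lambda)n,\varepsilon',\f)$ gives only $\htop(\varphi)\le(1+\Lambda)h(\f,F)$, strictly weaker than the claimed lower bound. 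The phrase ``upon rescaling'' does not repair either computation.

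For comparison, the paper's proof avoids all of this: it quotes $h(\f,g)=2\htop(\varphi)$ (Theorem 3.4.3 of \cite{bib:Wal04}) for the underlying Riemannian metric $g$, applies Theorem \ref{thm:Randers entropy} to get $\frac{1}{1+\|\beta\|}h(\f,g)\le h(\f,F)\le\frac{1}{1-\|\beta\|}h(\f,g)$, and finishes with the algebraic identities $\frac{2}{1-\|\beta\|}=1+\Lambda$ and $\frac{2}{1+\|\beta\|}=1+\frac{1}{\Lambda}$. If you insist on the direct route, you would essentially be re-proving that cited identity with Randers weights --- which is what the paper does, with the recentering made explicit, in the sharper Theorem \ref{thm:constantWind} --- and you would first need to replace your pointwise length estimate by the window estimate above.
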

\begin{proof}
 By Theorem 3.4.3 in \cite{bib:Wal04}, $h(\f,g) = 2\htop(\varphi)$. We have
 \begin{align*}
  h(\f,F)& \leq \frac{1}{1-\|\beta\|} h(\f,g) \leq \frac{2}{1-\|\beta\|} \htop(\varphi) \\
  &\leq \left(1+ \frac{1+\|\beta\|}{1-\|\beta\|}\right) \htop(\varphi).
 \end{align*}
 However, $\Lambda = \frac{1+\|\beta\|}{1-\|\beta\|}$. This gives the second inequality.
 
 To prove the first inequality, it is enough to observe that
 \begin{align*}
  h(\f,F)& \geq \frac{1}{1+\|\beta\|} h(\f,g) \geq \frac{2}{1+\|\beta\|} \htop(\varphi) \\
  &\geq \left(1+ \frac{1-\|\beta\|}{1+\|\beta\|}\right) \htop(\varphi).
 \end{align*}
 This ends our proof.
\end{proof}

\begin{thm}\label{thm:constantWind}
 Let $F=G+\beta$ be a leafwise Randers metric along one dimensional foliation defined by a $G$-unit vector filed $X$. Suppose that $\beta(X) = {\rm const}$. Then
 \[
 h(\f,F) = \frac{2}{1-\beta^2(X)} \htop(\varphi),
 \]
where $(\varphi_t)_{t\in\rr}$ denotes the flow of $X$.
\end{thm}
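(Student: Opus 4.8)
The plan is to sharpen Theorem \ref{thm:Randers entropy}. Its bounds give only
\[
\frac{2}{1+\|\beta\|}\,\htop(\varphi)\le h(\f,F)\le \frac{2}{1-\|\beta\|}\,\htop(\varphi),
\]
and since $1-\beta^2(X)=(1-\|\beta\|)(1+\|\beta\|)$, the target value $\frac{2}{1-\beta^2(X)}\htop(\varphi)$ lies \emph{strictly} between these two bounds. Hence the constancy of $\beta(X)$, together with one–dimensionality, must be exploited directly; this is exactly what makes the $F$-length of a leaf curve computable direction by direction. As $\f$ is one dimensional with $G$-unit generator $X$, every leaf curve satisfies $\dot\gamma=\lambda\,(X\circ\gamma)$, so $G(\dot\gamma)=|\lambda|$ and $\beta(\dot\gamma)=\lambda\,\beta(X)$. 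Writing $c=\beta(X)$ (a constant with $|c|=\|\beta\|<1$, since $T^1_g\f=\{\pm X\}$) and splitting the parameter into the forward part ($\lambda>0$, total flow-time $\tau_+$) and backward part ($\lambda<0$, total flow-time $\tau_-$), one gets
\[
l_G(\gamma)=\tau_++\tau_-,\qquad l_F(\gamma)=(1+c)\,\tau_++(1-c)\,\tau_-.
\]
In particular a leaf curve of $F$-length at most $R$ realizes net flow-time displacements exactly in the interval $I_F(R)=\bigl[-\tfrac{R}{1-c},\tfrac{R}{1+c}\bigr]$ (go purely backward, resp. purely forward), while a $G$-curve of length $\le R$ reaches the symmetric interval $I_G(R)=[-R,R]$; the window lengths are $|I_F(R)|=\tfrac{2R}{1-c^2}$ and $|I_G(R)|=2R$.

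Next I would reduce separation to the transverse dynamics of the flow. For the foliation by orbits of $\varphi$, the holonomy along a leaf curve from $\gamma(0)$ to $\gamma(1)$ depends only on the net flow-time $\tau_+-\tau_-$ between its endpoints — oscillations cancel, so excursions merely waste length — and it is the transverse germ of $\varphi_{\tau_+-\tau_-}$. Thus whether $x,y$ are $(R,\varepsilon)$-separated depends only on the transverse expansion of $\varphi_\tau$ for $\tau$ ranging over the reachable window. Let $b(J,\varepsilon)$ be the maximal cardinality of a subset of the transversal $\varepsilon$-separated by the flow over net times in an interval $J$; then $s(R,\varepsilon,F)=b(I_F(R),\varepsilon)$ and $s(R,\varepsilon,g)=b(I_G(R),\varepsilon)$. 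Since $M$ is compact, pushing forward by $\varphi_{t_0}$ distorts transverse distances by a bounded factor, so translating $J$ along the flow changes $b(J,\varepsilon)$ only by replacing $\varepsilon$ with a comparable $\varepsilon'$; hence, after $\varepsilon\to0$, the exponential growth rate of $b(J,\varepsilon)$ in $|J|$ depends on $J$ only through its length and equals $\htop(\varphi)$. This is precisely the mechanism behind $h(\f,g)=2\htop(\varphi)$ (Theorem 3.4.3 of \cite{bib:Wal04}), which comes from $|I_G(R)|=2R$.

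Finally I would assemble the estimate. Because $|I_F(R)|=\tfrac{2R}{1-c^2}=|I_G(\tfrac{R}{1-c^2})|$, the previous paragraph makes $b(I_F(R),\varepsilon)$ and $b\bigl(I_G(\tfrac{R}{1-c^2}),\varepsilon'\bigr)$ coincide up to the passage $\varepsilon\leftrightarrow\varepsilon'$, whence
\[
s(\varepsilon,\f,F)=\limsup_{R\to\infty}\frac1R\log b(I_F(R),\varepsilon)=\frac{1}{1-c^2}\,s(\varepsilon',\f,g).
\]
Letting $\varepsilon\to0^+$ yields $h(\f,F)=\frac{1}{1-c^2}\,h(\f,g)$, and combining with $h(\f,g)=2\htop(\varphi)$ and $c^2=\beta^2(X)$ gives the claimed identity.

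The main obstacle is the middle step: making rigorous that the \emph{asymmetric} reachable window $I_F(R)$ contributes the same entropy rate as a symmetric window of equal length. This rests on two points that must be stated with care — that holonomy of the flow foliation along a leaf curve is a function of the net flow-time alone, so the Randers asymmetry enters only through the endpoints of $I_F(R)$ and not through the shape of the separating curves; and that translation by the flow on the compact $M$ distorts the transverse quasi-metric by a bounded factor, so that the threshold $\varepsilon$ may be shifted without affecting the rate as $\varepsilon\to0$. Once these are secured, the computation above is routine.
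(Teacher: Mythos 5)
Your reachable-window computation is exactly the engine of the paper's proof: a leaf curve of $F$-length $R$ sweeps flow-times in $\bigl[-\tfrac{R}{1-c},\tfrac{R}{1+c}\bigr]$, a window of length $\tfrac{2R}{1-c^2}$, and the paper's upper bound is obtained precisely by recentering ($B=\varphi_{-n/(1-a)}(A)$ is $(\tfrac{2n}{1-a^2},\epsilon)$-separated for $\varphi$). That half of your argument is sound and is the same route as the paper's.

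The gap is in the lower bound, i.e.\ in the claimed identity $s(R,\varepsilon,F)=b(I_F(R),\varepsilon)$ and the assertion that the growth rate of $b(J,\varepsilon)$ depends on $J$ only through $|J|$. The direction $s\le b$ is easy (orthogonal projection onto the plaque of $y$ satisfies $d(\varphi_t(x),p_y\varphi_t(x))\le d(\varphi_t(x),\varphi_t(y))$), but the direction you need for $\tfrac{2}{1-c^2}\htop(\varphi)\le h(\f,F)$ requires converting a foliation-spanning set into a flow-spanning set, and there the foliation data only controls the orbit of $y$ \emph{as a set} near the orbit of $x$: the time-reparametrization between nearby orbits drifts, because the transverse fibers of a tube around an orbit are not invariant under the flow (the paper states this explicitly). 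So $p_y\varphi_t(x)$ sits at a flow-time $t'$ on the orbit of $y$ with $t'-t$ accumulating along the curve, and ``holonomy depends only on net flow-time'' does not by itself bound $d(\varphi_t(x),\varphi_t(y))$ at matched times. This is exactly what the paper's machinery is for: the tubes $\tub_\eta(x)$, the decomposition into cylinders $C_n(x)$, the count of at most $3^{k_an}$ drift itineraries yielding the pieces $K_j(x)$, the resulting inequality $\tfrac{2}{1-a^2}\htop(\varphi)\le k_a\log 3+h(\f,F)$, and finally the rescaling $F\mapsto\lambda F$ to kill the additive $k_a\log 3$. Your proposal names this as ``the main obstacle'' but then declares the rest routine; in fact resolving it is the bulk of the proof, and your exact equality $s(\varepsilon,\f,F)=\tfrac{1}{1-c^2}s(\varepsilon',\f,g)$ is stronger than what even the paper establishes directly (it only gets the equality after the rescaling limit). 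Note also that your fallback justification --- that translating the window distorts the transverse quasi-metric by a bounded factor --- is not the right mechanism: translating a Bowen-separated set by $\varphi_{t_0}$ costs nothing at all, while the factor by which $\varphi_{t_0}$ distorts transverse distances grows unboundedly with $t_0$, so neither reading of that sentence closes the gap.
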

\begin{proof}

Let $a=\beta(X)$. Since $\|\beta\|_G<1$ then $a\in (-1,1)$. Let $A$ be an $(n,\epsilon)$-separated set by $\f$ with respect to $F$. Then the set $B=\varphi_{-\frac{n}{1-a}}(A)$ is $(\frac{2n}{1-a^2},\epsilon)$-separated with respect to $\varphi$. Hence
\[
 s(n,\epsilon,F)\leq s\left(\frac{2n}{1-a^2},\epsilon,\varphi\right).
\]
This gives
\[
 h(\f,F) \leq \frac{2}{1-\beta^2(X)} \htop(\varphi).
\]

Let $\eta>0$. Let us consider the fiber bundle $\pi:\tilde M_\eta \to M$ built  of orthogonal balls $B^{\perp}(0_x,\eta)\subset T_x\f^{\perp}$, $x\in M$. For every $x\in M$, let $\tub_\eta(x) = \varphi^{-1}_{x *} \tilde M_\eta$ be the bundle over $\rr$ induced by a map $\varphi_x:t\mapsto \varphi_t(x)$. It is known, that for $\eta$ small enough, the exponential map on $M$ defines a natural immersion $\iota_x:\tub_{\eta}(x)\to M$, and one can equip $\tub_{\eta}(x)$ with the induced leafwise Randers structure and with the induced vector field $\tilde X$, which generates a local flow $(\tilde \varphi_t)$. As mentioned in \cite{bib:Wal04}, the family $\pi_x^{-1}(s)$, where $\pi_x$ is a fiber bundle projection in $\tub_\eta(x)$ and $s\in \rr$, of fibers of $\tub_\eta(x)$ is not invariant under the flow $(\tilde\varphi_t)$.

Let us fix $\varepsilon>0$. Since $M$ is compact, the family $\pi^{-1}(s)$, $s\in \rr$, of fibers of $\tub_\eta(x)$ satisfies the following:

For any $\tau\in (0,1)$ there exists $\eta>0$ such that for any $x\in M$ and $y\in\tub_\eta(x)\cap \pi^{-1}(0)$ with defined local flow $(\tilde\varphi_t)$   and $\pi(\tilde\varphi_t(y))=1$ (respectively $\pi(\tilde\varphi_t(y))=-1$) we have $t>\tau$ ($t<-\tau$). Moreover, if $\tau$ and $\eta$ are as above, then $t\geq n\tau$ ($t\leq -n\tau$) whenever $(\tilde\varphi_t)$ is defined and $\pi(\tilde\varphi_t(y))=n$ (respectively, $-n$), $n\in\nn$.

Let us decompose $\tub_\eta (x)$ into the cylinders $C_n(x) = \pi_x^{-1}([(2n-1)\varepsilon,(2n+1)\varepsilon])$, $n\in\zz$. Since $\varepsilon$ is fixed, there exists $\eta$ independent of $x\in M$ such that the sets $\tilde\varphi_1(C_0(x))$ and $\tilde\varphi_{-1}(C_0(x))$ intersect at most three cylinders of the form $C_n(x)$. For every $y\in C_0(x)$, we consider the sequences $(n_k)_{k\in\nn}$ of integers such that $\tilde\varphi_k(y)\in C_{n_k}(x)$ (we set $\infty$ if $\tilde\varphi_k(y)$ is undefined). The number of such sequences of length $[2n/(1-a^2)]-1$ do not exceed $3^{k_a n}$ for some natural number $k_a$. So, we can decompose all cylinders $C_0(x)$ into the unions of sets $C_0(x) = K_1(x)\cup\cdots\cup K_{m(x)}(x)$, $m(x)\leq 3^{k_a n+2}$ satisfying the following:
\vskip10pt
\noindent($*$) If $y,z\in K_j(x)$ , $[\frac{n}{1-a}]\leq k\leq [\frac{n}{1+a}]$ and $\tilde\varphi_k(y)$ and  $\tilde\varphi_k(z)$ are defined, then $\tilde\varphi_k(z)$ and $\tilde\varphi_k(y)$ belongs to the same cylinder $C_{n_k}(x)$.
\vskip10pt
Let $A\subset T$ be a maximal $(n,\frac{\eta}{3})$-separated by $\f$ with respect to $F$, that is, $\sharp A = s(n,\frac{\eta}{3},F)$. Since $A$ is maximal, then it is $(n,\frac{\eta}{3})$-spanning for $T$, and the sets
\begin{align*}
  A(x) &= \{y\in T: \sup_{-[\frac{n}{1-a}]\leq t\leq [\frac{n}{1+a}]} d(\varphi_t(x),p_y\varphi_t(x))\leq\frac{\eta}{3}\\
  &\textrm{ and } 
  \sup_{-[\frac{n}{1-a}]\leq t\leq [\frac{n}{1+a}]} d(p_y\varphi_t(x),\varphi_t(x))\leq\frac{\eta}{3}\}, \quad x\in A
\end{align*}
cover $T$. Moreover, $\max\limits_{x\in A} \diam A(x) \leq \frac{2\eta}{3}$. Therefore, $A(x)\subset \iota_x(C_0(x))$, and we can decompose $C_0(x)$ into $K_j(x)$ and choose one point $y^x_j$ in each nonempty piece of $A(x)\cap K_j(x)$. Let $B=\{y_j^x\}$. We have
\[
  \sharp B \leq 3^{k_a n+2} s(n,\frac{\eta}{3},F).
\]

Finally, let $y\in M$. There exists $R_0>0$ independent of $y$ such that $\varphi_t(y)\in T$ for some $t\in (-R_0,R_0)$. So there exists $x\in A$ and $j\leq m(x)$ for which $\varphi_t(y)\in A(x)\cap \iota_x K_j(x)$. Thus, by ($*$), $\tilde\varphi_{t+i}(y)$ and $\varphi_i(y^x_j)$ belong to the same cylinder $C_{n(i)}(x)$, and
\[
  d(\varphi_{t+i}(y),\varphi_i(y^x_j))\leq \frac{2\varepsilon}{1-a^2}+2\eta\textrm{ and } d(\varphi_i(y^x_j),\varphi_{t+i}(y))\leq 2\varepsilon+\frac{2\varepsilon}{1-a^2},
\]
for all $-[\frac{n\tau}{1-a}]\leq i\leq [\frac{n\tau}{1+a}]$. Moreover, there exists a constant $\omega$ such that for small $\eta$ and any $z,z'\in M$  the inequalities $\max\{d(z,z')<\eta,d(z',z)<\eta\}$ implies the relations $\max\{d(\varphi_t(z),\varphi_t(z')), d(\varphi_t(z'),\varphi_t(z))\}\leq \omega\eta$ for all $t\in[-R_0,R_0]$. Therefore, the set $\varphi_{-[\frac{\tau n}{1-a}]} B$ is $(\frac{2\tau n}{1-a^2}, 2\omega(\eta+\frac{\epsilon}{1-a^2}))$-spanning with respect to $\varphi$. Next,
\[
  r\left(\frac{2n\tau}{1-a^2}, 2\omega \left(\eta+\frac{\epsilon}{1-a^2}\right),\phi\right) \leq 3^{k_a n+2}s\left(n,\frac{\eta}{3},F\right).
\]
Thus
\[
  \frac{2\tau}{1-a^2} r\left(2\omega \left(\eta+\frac{\epsilon}{1-a^2}\right),\phi\right) \leq  k_a \log 3 + s\left(\frac{\eta}{3},F\right).
\]
This gives
\[
  \frac{2}{1-a^2} \htop(\varphi) \leq k_a\cdot \log 3 + h(\f,F),
\]
when we tend with $\eta$ and $\epsilon$ to zero, and choose $\tau$ arbitrarily close to $1$. Replacing $F$ by $\lambda F$, $\lambda >0$ we must replace $X$ by $\lambda^{-1} X$, and $(\varphi_t)$ by $(\tilde \varphi_t) = (\varphi_{t/\lambda})$. Hence
\[
  \frac{2}{1-a^2} \htop(\varphi) \leq \lambda \log 3 +h(\f,F).
\]
Since $\lambda$ can be arbitrarily small, we get the equality.
\end{proof}

\begin{rem}
 Let $\f$ be a one-dimensional foliation given by a vector field $X$. If $F$ is  Riemannian, then we get the exact result as in Theorem 3.4.3 of \cite{bib:Wal04}, that is,
 \[
  h(\f,F) = 2\htop(\varphi).
 \]
\end{rem}

\begin{rem}
The 1-form $\beta$ in the Randers metric is commonly understood as a mild wind blowing along the leaves of foliation. The direct conclusion of Theorem \ref{thm:constantWind} is that the increasing of a wind along the leaves increases the entropy $h(\f,F)$. 
\end{rem}


\begin{thebibliography}{99}

\bibitem{bib:AdlKonMcA65} R.L. Adler, A.G. Konheim, M.H. McAndrew, \emph{Topological entropy},  Transactions of the AMS {\bf 114}.2 (1965), 309–319. 

\bibitem{bib:Bow71}
R. Bowen, \emph{Entropy for group endomorphisms and homogenious spaces}, Transactions of the AMS {\bf 153} (1971), 401-414.

\bibitem{bib:GhyLanWal88}
E. Ghys, R. Langevin, P. Walczak, \emph{Entropie geometrique des feuilletages},
Acta Math. {\bf 160} (1988), 105-142.

\bibitem{bib:SayMolMog2014} 
Y. Sayyari, M. Molaei, S. M. Moghayer, \emph{Entropy of continuous maps on quasi-metric spaces}, preprint, arxiv:1403.2106v2.

\bibitem{bib:She01}
Z. Shen, \emph{Lectures on Finsler Geometry}, World Scientific, Singapore 2001.

\bibitem{bib:Sch14}
J. P. Schröder, \emph{Ergodic components and topological entropy in geodesic flows of surfaces}, preprint,	arXiv:1407.6259.

\bibitem{bib:Wal04}
P.Walczak, \emph{Dynamics of Foliations, Groups and Pseudogroups}, Monografe Matematyczne, Vol. 64, Birkh\"{a}user, Basel 2004.

\end{thebibliography}
\end{document}